\def\BState{\State\hskip-\ALG@thistlm}
\begin{document}

\title{Efficient QUBO transformation for Higher Degree Pseudo Boolean Functions
}


\author{Amit Verma        \and
        Mark Lewis \and 
        Gary Kochenberger
}


\institute{Amit Verma, Mark Lewis \at
	Craig School of Business, Missouri Western State University, Saint Joseph, MO, 64507, USA \\
	Tel.: +1(816)-271-4357\\
	Fax: +1(816)-271-4338\\
	\email{averma@missouriwestern.edu}          
           \and
           Gary Kochenberger \at
             College of Business, University of Colorado at Denver, Denver, CO, 80217, USA 
}

\date{Received: date / Accepted: date}

\maketitle

\begin{abstract}
Quadratic Unconstrained Binary Optimization (QUBO) is recognized as a unifying framework for modeling a wide range of problems. Problems can be solved with commercial solvers customized for solving QUBO and since QUBO have degree two, it is useful to have a method for transforming higher degree pseudo-Boolean problems to QUBO format. The standard transformation approach requires additional auxiliary variables supported by penalty terms for each higher degree term. This paper improves on the existing cubic-to-quadratic transformation approach by minimizing the number of additional variables as well as penalty coefficient. Extensive experimental testing on Max 3-SAT modeled as QUBO shows a near 100\% reduction in the subproblem size used for minimization of the number of auxiliary variables.
       
\keywords{Max 3-SAT \and Quadratic Unconstrained Binary Optimization \and Quadratic Reformulation \and Quadratization \and Quantum Annealing }
\end{abstract}

\section{Introduction} \label{intro}

Pseudo-Boolean functions appearing as non-linear polynomial expressions are an important part in optimization models of statistical mechanics, reliability theory, satisfiability theory, finance, manufacturing and operations research (\cite{boros2002pseudo}). Recently, Quadratic Unconstrained Binary Optimization (QUBO) models have garnered a lot of attention due to its applicability in quantum annealing. QUBO or equivalent Ising Spin $(+1/-1)$ models have emerged as a unifying framework for modeling a wide variety of combinatorial optimization problems (see \cite{kochenberger2014unconstrained} for details). Note that QUBO involves optimization of a pseudo-Boolean function of degree two. QUBO modeling framework relies on quadratic interaction among binary decision variables in the form of $max \; x^{t}Qx \; = \; max \; \sum_{i=1}^{n} \sum_{j=1}^{n} q_{ij} x_i x_j; \; x_i \in \{0,1\} \; \forall i \in \{1,2,...,n\}$. The diagonal and off-diagonal coefficients of the $Q$ matrix are determined based on the application area. Herein $n$ denotes the size of the QUBO instance. Smaller and sparser QUBO can be handled by mixed-integer linear programming solvers like CPLEX and Gurobi by linearizing the quadratic objective through additional constraints. The standard linearization technique replaces each quadratic term $x_i x_j$ a new variable $y_{ij}$ and includes the constraint $y_{ij} = x_i x_j$ as part of the optimization problem. However, as the number of quadratic terms becomes larger, the number of involved constraints increases, thereby restricting the usability of linear programming solvers. Larger QUBO are efficiently handled directly (without any linearization) by heuristics such as tabu search and path relinking described in \cite{wang2012path,lewis2021qfold}.

Higher degree pseudo-Boolean functions are handled via repeated variable substitutions using the Rosenberg quadratization penalty terms (\cite{rosenberg1975reduction}). More specifically, a product of two binary variables $x_i x_j$ is replaced with an additional binary variable $y_{ij}$ along with a penalty term $-M(x_i x_j - 2 x_i y_{ij} - 2 x_j y_{ij} + 3 y_{ij})$ for a maximization problem. The penalty term enforces the binary variable assignments such that the constraint $x_i x_j = y_{ij}$ is satisfied. This process is repeated until the higher degree pseudo-Boolean function is reduced to a degree two polynomial. Note that the additional $y_{ij}$ variables increases the size and density of the augmented $Q$ matrix, thereby impacting the computational performance of the associated optimization problem. The number of auxiliary variables could be minimized using the integer program proposed in \cite{verma2020optimal}. Various heuristic algorithms based on set covers (\cite{buchheim2008efficient}) and greedy techniques (\cite{rodriguez2018linear})  have also been proposed to identify the minimum number of auxiliary variables added to the original pseudo-Boolean function. We refer the readers to \cite{dattani2019quadratization} for a complete discourse on the various quadratization (process of transforming a higher degree pseudo-Boolean function to quadratic) techniques in discrete optimization. The lower bound of the penalty coefficient $M$ is determined using the nonlinear Constraint Programming (CP) approach described in \cite{verma2020optimal}. Utilizing the lower bound of $M$ (compared to an arbitrary choice of $M$) clearly impacts the computational performance of the underlying optimization problem based on the augmented $Q$ matrix.

In this paper, we reduce the size of the integer program proposed in \cite{verma2020optimal} for determining the minimum number of auxiliary variables. Further, we eliminate the need for a CP approach to determine the lower bound of $M$. Instead, we replace it with a linear expression that is easy to compute. We also demonstrate the applicability of the entire transformation process through modeling the popular Max 3-SAT as a cubic pseudo boolean function and determining the minimal augmented QUBO model.  Note that our goal is to demonstrate the reductions of IP and CP model proposed in \cite{verma2020optimal} through using Max 3-SAT as a testbed. The topic of modeling Max 3-SAT as a QUBO has been previously explored in \cite{choi2011different,m3satqubo,kofler2014penalty,gabor2019assessing,bian2020solving,kruger2020quantum}. For example, the authors in \cite{kofler2014penalty} propose a multi-start approach utilizing a set of elite solutions and adaptive memory schemes to solve the resulting cubic pseudo-Boolean penalty function. The authors in \cite{m3satqubo} utilized a termwise reduction approach to reduce the number of variables in QUBO by 33\% compared to the traditional QUBO transformation of Max 3-SAT through Maximum Independent Set reduction proposed in \cite{lucas2014ising}. The authors in \cite{gabor2019assessing,bian2020solving,kruger2020quantum} addressed various experimental challenges of solving Max 3-SAT on the quantum annealing hardware. Next, we present some preliminaries along with a standard procedure to transform a Max 3-SAT instance into a QUBO.

\section{QUBO transformation of Max 3-SAT} \label{transform}

Boolean satisfiability problem (SAT) is a well-studied combinatorial problem in the field of computer science. It determines the existence of an assignment of binary variables that satisfies a boolean formula. A boolean formula is represented by a set of clauses containing a combination of variables involving AND, OR and NOT operators. The Cook-Levin theorem (\cite{cook1971complexity}) proves that SAT is NP-Complete. Therefore if there exists a deterministic polynomial time algorithm for solving SAT, every NP problem can be solved in polynomial time (\cite{karp1972reducibility}). This result emphasizes the importance of SAT in the computational complexity domain. 3-SAT is a form of SAT where each clause is limited to at most three literals (positive or negated variables). Max 3-SAT is an optimization version of 3-SAT where we are interested in maximizing the number of satisfied clauses. The boolean formula is usually represented in the conjunctive normal form (CNF), wherein each clause is a disjunction of literals and the boolean formula is a conjunction of clauses. For example, $(x_1 \cup x_2 \cup x_3) \cap (\overline{x_1} \cup \overline{x_2} \cup x_3) \cap (\overline{x_1} \cup x_2 \cup \overline{x_3})$ is a 3-SAT instance expressed in the CNF format. Note that both positive literals ($x_i$) and negated literals ($\overline{x_i}$) are part of the clauses, and the boolean formula is the conjunction of the clauses, which consists of at most three literals. The satisfying assignment $x_1 = 0, x_2 = 1$ and $x_3 = 1$ represents a solution of 3-SAT. There is a possibility of finding multiple solutions to a 3-SAT instance. In this case, the solution satisfies all clauses in the boolean formula. Hence, we achieve maximum satisfiability.

Next, we describe a QUBO transformation of an instance of Max 3-SAT based on \cite{kofler2014penalty}. First, we need to design an objective function $g(X)$ for each clause. Each clause in the 3-SAT instance can contain 0,1,2 or 3 negated variables. Each of these cases yield a different objective function for Max 3-SAT since we are interested in maximizing the number of satisfied clauses. The four possibilities are:\\
\\
(i) No negations ($x_i \cup x_j \cup x_k$): $g(X) = x_i + x_j + x_k -x_i x_j- x_i x_k - x_j x_k + x_i x_j x_k$ \\
(ii) One negation ($x_i \cup x_j \cup \overline{x_k}$): $g(X) = 1- x_k + x_i x_k + x_j x_k - x_i x_j x_k $ \\
(iii) Two negations ($x_i \cup \overline{x_j} \cup \overline{x_k}$): $g(X) = 1 - x_j x_k + x_i x_j x_k$ \\
(iii) Three negations ($\overline{x_i} \cup \overline{x_j} \cup \overline{x_k}$): $g(X) = 1 - x_i x_j x_k$ \\
\\
Note that the objective function value is $1$ if the corresponding clause is satisfied; $0$ otherwise. For example, the first objective function $g(X)$ corresponding to ($x_i \cup x_j \cup x_k$) is $1$ for all possible assignments $x_i,x_j$ and $x_k$ except $x_i=0,x_j=0$ and $x_k=0$. Thus the objective function can be interpreted as a reward function. We could easily verify the objective function values using the truth tables for $x_i, x_j$ and $x_k$. In this way, we transform a Max 3-SAT instance with $m$ clauses into an unconstrained cubic binary optimization problem $max \; \sum_{i = 1}^{m} g(X)_i$ involving cubic terms like $x_i x_j x_k$. In order to transform this further into QUBO, we need to transform all cubic terms into quadratic terms using auxiliary variables with the help of Rosenberg quadratization presented in \cite{rosenberg1975reduction}. As discussed in Section 1, when a quadratic subterm $x_i x_j$ is substituted with an auxiliary variable $y_{ij}$, the penalty term  $ - M(x_i x_j - 2 x_i y_{ij} - 2 x_j y_{ij} + 3y_{ij})$ (where $M$ is a large positive number) is added to a maximization problem. 

The approach then is to introduce a set of auxiliary variables $Y$ such that the cubic binary optimization problem is transformed into the following QUBO:
\begin{align} \label{uqp}
	max \; \sum_{i = 1}^{m} g(X,Y)_i - M \sum_{aux \; vars \; y_{ij}} (x_i x_j - 2 x_i y_{ij} - 2 x_j y_{ij} + 3y_{ij})
\end{align}
Since each clause generates exactly one cubic term, the augmented QUBO consists of at most $m$ auxiliary variables. Hence a Max 3-SAT instance with $n$ variables and $m$ clauses would yield a QUBO instance with at most $n+m$ variables. The optimal solution to the QUBO instance corresponds to the number of satisfied clauses ($\le m$) in the Max 3-SAT instance.

In this transformation process, we prefer to use a smaller number of auxiliary variables since the size and density of the resulting QUBO affects its computational performance (\cite{verma2020optimal}). For this purpose, the authors in \cite{verma2020optimal} proposed an integer program to minimize the number of auxiliary variables for higher degree pseudo-Boolean polynomials. We extend their work by significantly reducing the size of the proposed linear program. In this process, some variables are determined apriori and eliminated for consideration in the integer program. Our findings are presented in Section \ref{ipandlb}. The authors in \cite{verma2020optimal} also proposed a non-linear CP model to minimize the penalty coefficient $M$ used in Rosenberg quadratization. We further eliminate the need for the CP program with the help of Equation \ref{rednlp} in Section \ref{ipandlb}. This leads to a significant reduction in overall computational effort. 

Next, we will demonstrate the standard QUBO transformation for Max 3-SAT with the help of an example.

\subsection{Example}
Let us consider the following Max 3-SAT instance with $4$ variables and $4$ clauses in the CNF format:\\
\\
$(x_1 \cup x_2 \cup x_3) \cap (\overline{x_1} \cup \overline{x_2} \cup x_3) \cap (\overline{x_1} \cup x_2 \cup \overline{x_3}) \cap (x_1 \cup x_2 \cup \overline{x_4})$\\
\\
By using the transformations described in Section \ref{intro}, we arrive at the following optimization problem for the Max 3-SAT instance:\\
\\
$max \; (x_1 + x_2 + x_3 -x_1 x_2- x_1 x_3 - x_2 x_3 + x_1 x_2 x_3) + (1 - x_1 x_2 + x_1 x_2 x_3) + (1 - x_1 x_3 + x_1 x_2 x_3) + (1- x_4 + x_1 x_4 + x_2 x_4 - x_1 x_2 x_4)$\\
\\
Summing the individual terms in the objective function yields:\\
\\
$ max \; 3 + x_1 + x_2 + x_3 - x_4 - 2 x_1 x_2 - 2 x_1 x_3 + x_1 x_4 - x_2 x_3 + x_2 x_4 + 3 x_1 x_2 x_3 - x_1 x_2 x_4$\\
\\
To convert to the QUBO framework, we want to transform the cubic terms $x_1 x_2 x_3$ and $x_1 x_2 x_4$ with the lowest number of auxiliary variables using the integer program proposed in \cite{verma2020optimal}. Herein, we utilize only one auxiliary variable $y_{12}$ in lieu of the quadratic subterm $x_1 x_2$. Note that this additional binary variables $y_{12}$ covers both cubic terms. However, the auxiliary variable introduces a penalty term $-M (x_1 x_2 - 2 x_1 y_{12} - 2 x_2 y_{12} + 3 y_{12})$ according to Rosenberg quadratization to enforce the constraint $y_{12} = x_1 x_2$. We could also minimize the value of $M$ using the non-linear program proposed in \cite{verma2020optimal} in order to improve computational performance. However, we utilize $M=10$ for the following QUBO transformation: \\
\\
$ max \; 3 + x_1 + x_2 + x_3 - x_4 - 2 x_1 x_2 - 2 x_1 x_3 + x_1 x_4 - x_2 x_3 + x_2 x_4 + 3 x_3 y_{12} - x_4 y_{12} - 10 (x_1 x_2 - 2 x_1 y_{12} - 2 x_2 y_{12} + 3 y_{12})$\\
\\
which can be further simplified to:\\
$ max \; 3 + x_1 + x_2 + x_3 - x_4 - 30 y_{12} - 12 x_1 x_2 - 2 x_1 x_3 + x_1 x_4 + 20 x_1 y_{12} - x_2 x_3 + x_2 x_4 + 20 x_2 y_{12} + 3 x_3 y_{12} - x_4 y_{12})$\\
\\
This could be easily represented as:\\
$max \; 3 + x^{t} Q x$\\
where the matrix $Q$ in the upper triangular format is given by:\\
$\begin{bmatrix}
1 & -12 & -2 & 1 & 20 \\
0 & 1 & -1 & 1 & 20 \\
0 & 0  & 1  & 0  & 3 \\
0 & 0  & 0  & -1  & -1 \\
0 & 0  & 0  & 0  & -30 
\end{bmatrix}
$\\
\\
Solving this QUBO yields the solution $x_1 = 0, x_2 = 1, x_3 = 1, x_4 = 1$ and $y_{12} = 0$ having an objective function value of $4$ meaning all four clauses are satisfied.

\section{Cubic to Quadratic Transformation} \label{ipandlb}
As discussed in Section \ref{transform}, each 3-SAT clause introduces a cubic term in the objective function. These cubic terms are transformed to quadratic terms using the penalties given by Rosenberg quadratization. In this transformation process, we are interested in minimizing the number of auxiliary variables. We used the integer program proposed in \cite{verma2020optimal} to address this issue. For every cubic term, the integer program identifies the quadratic subterm that will be replaced by a new auxiliary variable. Recall that the transformation $x_i x_j = y_{ij}$ adds a penalty term $-M(x_i x_j - 2 x_i y_{ij} - 2 x_j y_{ij} + 3 y_{ij})$ for a maximization problem.

However, the integer program that identifies the quadratic subterm covering every cubic term needs $3$ variables for each cubic term. Thus, the size of the resulting IP is $O(3m)$, where $m$ is the number of clauses. To reduce the overall size of the IP, we utilize the following lemma:

\begin{lemma}
	In order to minimize the number of auxiliary variables during the transformation, the quadratic subterm $x_i x_j$ will always be utilized if it dominates with respect to the frequency count in all the cubic terms in which it appears i.e. $(f(x_i x_j) > f(x_i x_k)) \; \& \; (f(x_i x_j) > f(x_j x_k)) \; \forall (x_i x_j) \subset (x_i x_j x_k) \; s.t. \; (x_i x_j x_k) \subset C$ where $f()$ represents the function that counts the occurrence of each quadratic subterm across all cubic terms and $C$ denotes the set of cubic terms. 
\end{lemma}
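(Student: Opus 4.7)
The plan is to reformulate the objective of minimizing auxiliary variables as a set-cover problem, in which each cubic term of $C$ is an element of the ground set and each quadratic pair $x_a x_b$ is the set of cubics that contain $x_a x_b$ as a subterm (of size $f(x_a x_b)$). The lemma then reduces to showing that whenever $x_i x_j$ strictly dominates the other two pairs in every cubic of $T(x_i x_j) = \{(x_i x_j x_k)\in C\}$, at least one optimal cover must contain $x_i x_j$, which I would establish by a direct exchange argument.

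The first concrete step is a structural observation that makes the exchange tight: for any pair $q \ne x_i x_j$, the set $q$ is a subset of at most one cubic of $T(x_i x_j)$. Indeed $q$ must be of the form $x_i x_k$ or $x_j x_k$, and the unique cubic in $T(x_i x_j)$ containing such a $q$ is $(x_i x_j x_k)$ itself. Consequently, if $S^{*}$ is any optimal cover with $x_i x_j \notin S^{*}$, the subfamily $Q \subseteq S^{*}$ responsible for covering $T(x_i x_j)$ has cardinality exactly $f(x_i x_j)$, with one distinct pair of $Q$ assigned to each cubic of $T(x_i x_j)$.

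Next I would perform the exchange. Set $S_0 = (S^{*} \setminus Q) \cup \{x_i x_j\}$, so that $|S_0| = |S^{*}| - f(x_i x_j) + 1$ and every cubic of $T(x_i x_j)$ is covered by $x_i x_j$ alone. The only cubics that might be left uncovered in $S_0$ are those outside $T(x_i x_j)$ for which some $q \in Q$ was the sole cover in $S^{*}$. Let $R$ be a minimum set of additional pairs needed to re-cover those stray cubics, giving the feasible cover $S' = S_0 \cup R$, which does contain $x_i x_j$.

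The main obstacle, and where I expect the bulk of the proof to live, is establishing the bound $|R| \le f(x_i x_j) - 1$, which yields $|S'| \le |S^{*}|$ and hence a new optimum containing $x_i x_j$. The strict-dominance hypothesis forces $f(q) \le f(x_i x_j) - 1$ for every $q \in Q$, so each $q$ can be the sole cover of at most $f(q) - 1 \le f(x_i x_j) - 2$ cubics outside $T(x_i x_j)$; the delicate part is converting this per-pair inequality into a global injective charging of the elements of $R$ to distinct members of $Q$, exploiting the fact that every stray cubic shares the variable $x_i$ or $x_j$ with the $q \in Q$ that was previously its unique cover. Once this charging is in place, the single pair $x_i x_j$ absorbs the last unit of slack and the inequality $|S'| \le |S^{*}|$ follows, completing the proof.
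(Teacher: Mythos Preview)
Your set-cover reformulation and the first structural step are correct, and your instinct that the whole argument hinges on the inequality $|R|\le f(x_i x_j)-1$ is exactly right. The problem is that this inequality is simply false, so the ``injective charging'' you hope to construct does not exist. Take the nine cubic terms
\[
x_1x_2x_3,\;x_1x_2x_4,\;x_1x_2x_5,\;x_1x_3x_6,\;x_2x_3x_7,\;x_1x_4x_8,\;x_2x_4x_9,\;x_1x_5x_{10},\;x_2x_5x_{11}.
\]
Here $f(x_1x_2)=3$ while every other pair occurring in a cubic of $T(x_1x_2)$ has frequency $2$, so the strict-dominance hypothesis holds for $x_1x_2$. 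The six pairs $x_1x_3,x_2x_3,x_1x_4,x_2x_4,x_1x_5,x_2x_5$ cover all nine cubics, and this is optimal because no pair lies in two of the last six cubics, so at least six pairs are needed just for those. But any cover containing $x_1x_2$ still needs six further pairs for cubics $4$--$9$, giving seven in total. Hence \emph{no} optimal cover contains $x_1x_2$, and in your exchange from $S^{*}=\{x_1x_3,\dots,x_2x_5\}$ one gets $|Q|=3$, $|S_0|=4$, and $|R|=3>f(x_1x_2)-1=2$.

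This is not a defect of your plan relative to the paper: the paper's own proof is a one-line local exchange (``replacing the chosen quadratic subterm with $x_ix_j$ will not deteriorate the number of auxiliary variables'') that silently assumes the removed pair $x_ix_k$ is not needed anywhere else, which is precisely the case your $R$ is meant to handle. The example above shows that assumption can fail, so the lemma as stated is not true in general; at best it is a sound heuristic rule rather than a theorem, and neither your charging argument nor the paper's contradiction argument can be completed.
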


\begin{proof}
	Each cubic term will use exactly one auxiliary variable in lieu of the quadratic subterm. The quadratic subterm $x_i x_j$ should be preferred for replacement by an auxiliary variable compared to other quadratic subterms $x_i x_k$ and $x_j x_k$ as long as $x_i x_j$ is the dominant quadratic subterm in every cubic term it is part of. Since the quadratic subterm $x_i x_j$ is dominant in terms of frequency counts in every shared cubic term, there is no better assignment than replacing $x_i x_j$ with an auxiliary variable. If we assume that $x_i x_k$ is used instead of $x_i x_j$ for variable substitution in the cubic term $x_i x_j x_k$, then replacing the chosen quadratic subterm with $x_i x_j$ will not deteriorate the utilized number of total auxiliary variables. Hence, by contradiction, the optimal set of quadratic subterms must include $x_i x_j$.
\end{proof}

By using Lemma 1, we easily identify the quadratic subterms that are dominating other quadratic subterms in the proposed integer program. In this way, we identify the quadratic subterm that is chosen for coverage for some $k'th$ cubic term. 

Next, we will illustrate Lemma 1 with the help of an example. Let us assume that the cubic terms in the pseudo-Boolean polynomial are $x_1 x_2 x_3, x_1 x_2 x_4,$ $x_1 x_2 x_5, x_1 x_2 x_6, x_2 x_3 x_7$ and $x_2 x_3 x_8$. For the first cubic term, $f(x_1 x_2)=4, f(x_1 x_3)=1, f(x_2 x_3)=3$. Hence $x_1 x_2$ dominates $x_1 x_3$ and $x_2 x_3$ according to Lemma 1. Thus, for the cubic term $x_1 x_2 x_3$, the quadratic subterm $x_1 x_2$ will be replaced by an auxiliary variable $y_{12}$ yielding $x_1 x_2 x_3 = y_{12} x_3$. 

After introducing the least number of auxiliary variables, we add the penalty $-M(x_i x_j - 2 x_i y_{ij} - 2 x_j y_{ij} + 3 y_{ij})$ for every transformation $x_i x_j = y_{ij}$ to the objective function described in Section 1. To facilitate the solver, we are interested in minimizing the magnitude of the penalty coefficient $M$. For this purpose, following non-integer program in \cite{verma2020optimal} is solved using a CP solver:
\begin{align} \label{nlp}
	M_{LB} = max \; \frac{ \sum_{i<j< k} a_{ijk} (x_i x_j x_k - y_{ij} x_k )}{\sum_{i < j} (x_i x_j - 2 x_i y_{ij} - 2 x_j y_{ij} + 3 y_{ij})}
\end{align}
Herein $a_{ijk}$ is the corresponding coefficient of the cubic term $x_i x_j x_k$. We need to maximize the right-hand side of Equation \ref{nlp} with the constraint that the denominator is non-zero to obtain the lower bound on the penalty coefficient $M$. To reduce the computational effort of the CP model using Equation 2, we propose the following reduction of Equation \ref{nlp}:
\begin{align} \label{rednlp}
	M_{LB} = max (\forall_{x_i x_j = y_{ij}} (\sum_{k} a_{ijk} \; , \; - \sum_{k} a_{ijk} ))
\end{align}

Next, we will prove the equivalence of Equation \ref{nlp} and \ref{rednlp}. In order to maximize the right hand side of Equation \ref{nlp}, we need to simultaneously maximize the numerator and minimize the denominator. The denominator of each term of the right hand side of Equation \ref{nlp} takes values 0, 1 or 3. Recall that we are interested in non-zero values of the numerator and denominator. The non-zero cases when denominator is 1 is given by (1) $x_i =1, x_j = 1, y_{ij} = 0$ and $x_k = 1$ (2) $x_i =1 , x_j=0, y_{ij}=1$ and $x_k=1$ (3) $x_i =0 , x_j=1, y_{ij}=1$ and $x_k=1$. Note that other variable assignments yields a numerator or denominator value of zero. Scenario (1) reduces the right hand side to $\sum_k a_{ijk}$ for the transformation $x_i x_j = y_{ij}$, if we consider only transformation with one auxiliary variable $y_{ij}$. If we also consider an additional variable substitution $x_m x_n = y_{mn}$, the right hand side is reduced to $\frac{\sum_k (a_{ijk} + a_{mnk})}{1+1}$ which is the arithmetic mean of the two values $\sum_k a_{ijk}$ and $\sum_k a_{mnk}$. One of these two values is guaranteed to exceed the expression of the arithmetic mean. If we consider three auxiliary variables $y_{ij}, y_{mn}$ and $y_{pq}$ then the right hand side of Equation \ref{nlp} is reduced to $\frac{\sum_k (a_{ijk} + a_{mnk} + a_{pqk} )}{1+1+1}$ which represents the average of three extreme values. Hence, scenario (1) yields an estimate of $M_{LB} = max (\forall_{x_i x_j = y_{ij}} (\sum_{k} a_{ijk}))$. Similarly, scenarios (2) and (3) reduces to $M_{LB} = max (\forall_{x_i x_j = y_{ij}} (-\sum_{k} a_{ijk}))$. We could also combine scenario (1) for some auxiliary variables, for example $y_{ij}$ and $y_{mn}$ while using scenario (2) or (3) for other auxiliary variables, for example $y_{pq}$. Thus the right hand side of Equation \ref{nlp} is reduced to $\frac{\sum_k (a_{ijk} + a_{mnk} - a_{pqk} )}{1+1+1}$ which is guaranteed to be less than one of the following extreme values: (i) $\sum_{k} a_{ijk}$ (ii) $\sum_{k} a_{mnk}$ and greater than (iii) $\sum_{k} a_{pqk}$ by the properties of arithmetic mean. The denominator value of 3 is possible when $x_i = 0, x_j = 0, y_{ij} = 1$ and $x_k = 1$ which reduces the numerator to $\sum_k a_{ijk}$ similar to other scenarios. However, the denominator value is higher. Since we are interested in maximizing the right hand side of Equation \ref{nlp}, this scenario is dominated by others. Therefore Equation \ref{nlp} is equivalent to Equation \ref{rednlp}.

Next, we illustrate an application of Equation \ref{rednlp}. For the example using cubic terms $x_1 x_2 x_3, x_1 x_2 x_4,$ $x_1 x_2 x_5, x_1 x_2 x_6, x_2 x_3 x_7$ and $x_2 x_3 x_8$, the minimum number of auxiliary variables is 2 namely $x_1 x_2 = y_{12}$ and $x_2 x_3 = y_{34}$. In this way, the first term $x_1 x_2 x_3$ is reduced to $y_{12} x_3$. Thus we apply Equation \ref{rednlp} as follows:
\begin{align} \label{rednlpex1}
	M_{LB} = max(1+1+1+1,1+1,-1-1-1-1,-1-1) = 4
\end{align}
because the first auxiliary variable appears in 4 cubic terms while the second auxiliary variable is shared with 2 cubic terms. This result matches the output of the non-linear program proposed in Equation \ref{nlp}.

\section{Solution Technique}
Our pseudo-code is summarized in Algorithm \ref{algo1}. First, we transform all clauses in the Max 3-SAT instance using the appropriate rewards discussed in Section \ref{intro}. These rewards vary with respect to the number of negated variables in each clause. The reward function is designed such that the value is $1$ if the clause is satisfied. The goal is to maximize the number of satisfied clauses in Max 3-SAT. Second, we sum all the reward terms contributed by each clause. Note that this generates cubic terms, some of which might be eliminated. Thereafter, we extract all the cubic terms that are part of the objective function. We are interested in transforming these cubic terms into quadratic terms using a minimum number of auxiliary variables by solving the integer program proposed in \cite{verma2020optimal} while utilizing Lemma 1 in Section \ref{ipandlb}. We will analyze the impact of Lemma 1 on the size of the integer program in Section \ref{tests}. Next, we identify the lower bound on the penalty coefficient $M$ using Equation \ref{rednlp}. Thereafter, we perform the variable substitutions to convert each cubic term in the objective function to a quadratic term with the help of Rosenberg quadratization. This process concludes with the generation of the $Q$ matrix. Lastly, we solve the final quadratic program using the path relinking and tabu search based generic heuristic solver designed in \cite{lewis2021qfold}. We present the results from the heuristic on benchmark SATLIB instances of Max 3-SAT in Section \ref{tests}.

\begin{algorithm}
	\caption{Minimal QUBO for Max 3-SAT}\label{algo1}
	\begin{algorithmic}[1]
		\Procedure{}{}
		\State $n \gets \text{number of variables}$
		\State $m \gets \text{number of clauses}$.
		\State $S \gets \text{set of all terms in the objective function}$.
		\State $C \gets \text{set of all cubic terms in the objective function}$.
		\State $IP \gets \text{integer program to identify the minimum number of auxiliary variables after applying Lemma 1}$.
		\State $LB \gets \text{lower bound of penalty coefficient obtained from Equation 3}$.
		\State $Y \gets \text{set of auxilliary variables}$.
		\BState \emph{top}:
		\State $\text{Convert each clause} \; i \;  \text{into a cubic penalty term} \; g(X) \; \text{described in Section 1}$
		\State $\text{Objective Function } \textit{obj}  \gets \sum_{i=1}^{m} g(X)_{i}$
		\BState \emph{loop}:
		\If {$(\text{Degree of term i in} \; \textit{obj})  == 3$} 
		\State $C \gets C \cup (Term \; i)$
		\EndIf
		\State $Y \gets IP(C)$
		\State $M^{*} \gets LB(C,Y) \text{using Equation 3}$
		\State $Q \gets \text{Transformed QUBO using sets C, Y and} \; M^{*}$
		\State $\text{Optimize} \; Q \; \text{using Path Relinking Tabu Search routine} $
		\EndProcedure
	\end{algorithmic}
\end{algorithm}

\section{Computational Experiments} \label{tests}

For computational tests, we utilized 3-SAT instances from the DIMACS benchmark set for SAT problems (\cite{hoos2000satlib}). The number of variables ranges from 95 to 450 variables, with the number of clauses between 254 to 1680. More specifically, we focused on the flat graph coloring instances (FLAT), Random 3-SAT Backbone-minimal instances (RTI and BMS), and Random 3-SAT with Controlled Backbone Size instances (CBS). The backbone of a satisfiable Max 3-SAT instance is given by the set of literals that are true in every satisficing assignment. Backbone size is related to the difficulty of the problem instance (\cite{monasson1999determining}). The details of the studied instance types are presented in Table 1. We also present the resulting size of the Q matrix (as a result of the proposed transformation) in terms of average number of nodes and edges for each of these instance types. All these instances have maximum satisfiability, implying that all clauses are satisfiable through some variable assignment. In other words, the optimal objective function value in our case is given by $m$, where $m$ is the number of clauses. Note that this also translates as a use-case for combining goal programming with QUBO as described in \cite{verma2021goal}. Herein, we are interested in solutions wherein $max \; x'Qx = m$. We reserve such topics for future research.

\begin{table}
	\centering
	\caption{Characteristics of Benchmark Max 3-SAT Instances}
	\begin{tabular}{l|ll|ll|l|ll}
		\hline
		Instance Type & \multicolumn{2}{c}{\# Variables} & \multicolumn{2}{c}{\# Clauses} & \# Instances & \multicolumn{2}{c}{Q size} \\
		& Min             & Max            & Min           & Max            &    & Avg Nodes & Avg Edges          \\
		\hline
		FLAT          & 95              & 450            & 300           & 1680           & 1500     & 262 & 1112    \\
		RTI           & 100             & 100            & 429           & 429            & 500    & 421 & 2219      \\
		BMS           & 100             & 100            & 254           & 318            & 500    & 333 & 1592      \\
		CBS           & 100             & 100            & 429           & 429            & 25000 & 414 & 2169  \\
		\hline    
	\end{tabular}
\end{table}

For these different types of instances, we ran Algorithm 1 for a time limit of 30 seconds. The transformation routine described in Algorithm 1 was implemented in Python 3.6. The experiments were performed on a 3.40 GHz Intel Core i7 processor with 16 GB RAM running 64 bit Windows 7 OS. We present the aggregate results grouped by each instance type in Table 2. The complete set of results can be obtained from \cite{res}. For each instance type, we present ``\% Satisfied" which represents the best solution obtained by the solver in the limited runtime as a percentage of the optimal solution given by the total number of clauses. In other words, ``\% Satisfied" is given by $z_{best}*100/m$ where $z_{best}$ is the best objective function value obtained by the heuristic solver in 30 seconds. Our intention is to showcase the efficient modeling of a cubic pseudo-Boolean function in the QUBO framework. We do not aim to compete with the classical Max 3-SAT solvers. However, our choice of allotted time of 30 seconds is a reasonable estimate of the average solution time of various algorithms on benchmark instances (see \cite{ishtaiwi2021dynamic} for a recent experimental evaluation).  We observe from Table 2 that the QUBO transformation for Max 3-SAT performs very efficiently with completely determining the required variable assignments for many instances. The performance of the algorithm on flat graph coloring instances (FLAT) is exemplary, with the algorithm leading to maximum satisfiability in all instances. This resonates with the findings by the authors in \cite{kochenberger2005unconstrained} that established the efficacy of the QUBO modeling framework for graph coloring problems.

\begin{table}
	\centering
	\caption{Aggregate results of QUBO transformation on Max 3-SAT Instances}
	\begin{tabular}{l|l|llll}
		& Benchmark & Min & Max & Mean & Stdev \\
		\hline
		& Flat      & 99  & 100 & 100  & 0.2   \\
		& RTI       & 96  & 99  & 98   & 0.6   \\
		\% Satisfied & BMS       & 96  & 100 & 98   & 0.5   \\
		& CBS       & 94  & 100 & 98   & 0.6   \\
		\hline
		& Flat      & 100 & 100 & 100  & 0     \\
		& RTI       & 95  & 100 & 98   & 0.8   \\
		\% Reduction & BMS       & 96  & 100 & 99   & 0.7   \\
		& CBS       & 95  & 100 & 98   & 0.8   \\
		\hline
		& Flat      & 1   & 1   & 1    & 0     \\
		& RTI       & 2   & 4   & 3    & 0.3   \\
		$M_{LB}$        & BMS       & 2   & 4   & 3    & 0.5   \\
		& CBS       & 2   & 6   & 3    & 0.3  \\
		\hline
	\end{tabular}
\end{table}

The solution quality of Algorithm 1 is captured via the ``\% Satisfied" column. We could observe from Table 2 that the proposed efficient transformation with a generic QUBO solver consistently performs very well. The average computational performance on benchmark datasets in terms of the number of satisfied clauses exceeds 97\%. Next, we analyze the impact of Lemma 1 on the computational experiments through the ``\% Reduction" column, which represents the percentage of the quadratic subterms that are dominating in the IP used for determining the minimum number of auxiliary variables. The cubic terms involving these quadratic subterms are eliminated from consideration in the IP. Thus, such quadratic subterms are always chosen for substitution with an auxiliary variable in the cubic terms that share commonality with such quadratic subterms. The ``\% Reduction" column clearly demonstrates the efficacy of Lemma 1 which reduces the size of the IP by around 95-100\%. The naive implementation without Lemma 1 involves the complexity of $O(3m)$ for the size of the IP used in finding minimal auxiliary variables wherein $m$ is the number of clauses. Hence, the proposed Lemma 1 leads to a significant reduction in the number of variables required.

We also report the distribution of $M_{LB}$ determined by Equation 3 utilized by the QUBO transformation. Note that the lower bound ranges from $1$ to $6$, which is significantly smaller than an arbitrary choice of $M$, for example, $M=25$ or $M=100$ or $M=1000$. As demonstrated in \cite{verma2020optimal}, the choice of a lower value of $M_{LB}$ leads to a significant improvement in computational performance. In this use case, moving from $M = M_{LB}$ to $(25, 100, 1000)$ deteriorates the solution quality slightly from $Mean \; \% \; Satisfied = 97.71\%$ to $(97.59\%, 97.55\%, 97.42\%)$ for the RTI datasets. Thus, the average solution quality on the benchmark instances progressively declines as $M$ increases. Upon close investigation, $M = M_{LB}$ improves the computational performance in terms of the number of satisfied clauses in around 60\% of the instances compared to other $M$ values. Hence, the penalty coefficient $M$ can be reduced by an order of magnitude without affecting performance. Note that reducing the magnitude of the penalty coefficient $M$ is also useful for the current generation of quantum annealers (relying on QUBO models) because of the limited range and interval of the coefficient.

\section{Conclusions and Future Research}

We present a quadratic transformation of higher degree pseudo-Boolean function while efficiently minimizing the number of additional variables and the associated penalty coefficient. Extensive computational tests indicate that the approach is viable for Max 3-SAT using a generic metaheuristic designed for the QUBO framework. Further improvement in computational performance could be achieved by utilizing the techniques proposed in \cite{verma2021constraint,verma2021qubo}.


\bibliographystyle{spmpsci}      
\bibliography{m3sat}   

\end{document}